\theoremstyle{plain}
\newtheorem{thm}{Theorem}
\newtheorem{lem}[thm]{Lemma}
\newtheorem{conj}[thm]{Conjecture}
\theoremstyle{plain}
\theoremstyle{plain}
\theoremstyle{plain}
\title{Antimagic orientation of lobsters
\thanks{This work is supported by NSFC (11901263, 61802158)}}
\author{Yuping Gao$^{a}$
\thanks{Corresponding author. \emph{E-mail address:} gaoyp@lzu.edu.cn.},
Songling Shan$^{b}$\\
{\small a. School of Mathematics and Statistics, Lanzhou University, Lanzhou 730000, China}\\
{\small b. Department of Mathematics, Illinois State University, Normal, IL 61790, USA}}
\date{}
\begin{document}
\baselineskip 0.65cm

\maketitle
\begin{abstract}

Let $m\ge 1$ be an integer and $G$ be a graph with $m$ edges. We say that $G$ has an antimagic orientation if $G$ has an orientation $D$ and a bijection $\tau:A(D)\rightarrow \{1,2,\cdots,m\}$ such that no two vertices in $D$ have the same vertex-sum under $\tau$, where the vertex-sum of a vertex $u$ in $D$ under $\tau$ is the sum of labels of all arcs entering $u$ minus the sum of labels of all arcs leaving $u$. Hefetz, M\"{u}tze and Schwartz [J. Graph Theory, 64: 219-232, 2010] conjectured that every connected graph admits an antimagic orientation.
The conjecture was confirmed for certain classes of graphs such as dense graphs, regular graphs, and trees including caterpillars and  $k$-ary trees.
In this note, we prove that every lobster admits an antimagic orientation.

\noindent {\textbf{Keywords}: lobster, antimagic labeling, antimagic orientation}
\end{abstract}

\section{Introduction}
Let $q>p$  be two integers. We use $[p,q]$ to denote the set $\{p,p+1,\cdots,q\}$,
and simply write $[1,q]$ as $[q]$.
Graphs considered in this paper are simple, and  digraphs in consideration are those which are orientations of simple graphs.
   Let $m\ge 1$ be an integer and $G$ be a graph with $m$ edges. We use $V(G),E(G),\Delta(G)$ and $\delta(G)$ to denote the vertex set, the edge set, the maximum degree and the minimum degree of $G$, respectively.  An \emph{antimagic labeling} of $G$ is a bijection $\tau$ from $E(G)$ to $[m]$ such that for any two distinct vertices $u$ and $v$ in $G$, the sum of labels on the edges incident with $u$ differs from that of $v$. A graph is said to be \emph{antimagic} if it admits an antimagic labeling. The concept of antimagic labeling was proposed by Hartsfield and Ringel in 1990 \cite{HR1990}. In the same paper, they conjectured that every connected graph other than $K_2$ is antimagic and every tree other than $K_2$ is antimagic. This  topic was investigated by many researchers, for instance, see \cite{CLPZ2016,LZ2014, YHYWW2018}.
   One of the best known results for trees was due to Kaplan, Lev, and Roditty \cite{KLR2009}, who proved that any tree having more than two vertices and at most one vertex of degree two
is antimagic (see also \cite{LWZ2014}).
Lozano, Mora and Seara \cite{LMS2019} proved that caterpillars are antimagic, where a \emph{caterpillar} is a tree of order at least 3 such that the removal of its leaves produces a path. For related results the readers are referred to the
survey of Gallian \cite{G2017}.

In 2010, Hefetz, M\"{u}tze and Schwartz introduced a variation of antimagic labeling, i.e., antimagic labeling of a digraph \cite{HMS2010}. Let $D$ be a digraph. We use $V(D)$ and $A(D)$ to denote the set of vertices and the set of arcs of $D$, respectively. Let $X,Y\subseteq V(D)$ be two subsets, we use $A(X,Y)$ to denote the set of arcs with tail in $X$ and head in $Y$. The notation $A(X,\overline{X})$ is also denoted by $\partial(X)$. Let $|A(D)|=m$. An \emph{antimagic labeling} of $D$ is a bijection $\tau$ from $A(D)$ to $[m]$ such that no two vertices receive the same vertex-sum, where the \emph{vertex-sum} of a vertex $u\in V(D)$ is the sum of labels of all arcs entering $u$ minus the sum of labels of all arcs leaving $u$. We use $s_D(u)$ to denote the vertex-sum of the vertex $u\in V(D)$,
and simply write $s(u)$ if $D$ is understood.
 We say $(D,\tau)$ is an \emph{antimagic orientation} of a graph $G$ if $D$ is an orientation of $G$ and $\tau$ is an antimagic labeling of $D$. Hefetz, M\"{u}tze and Schwartz~\cite{HMS2010} proposed the following conjecture.

\begin{conj} \label{orientation-conj} Every connected graph admits an antimagic orientation.
\end{conj}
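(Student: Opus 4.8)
The plan is to attack the full conjecture by structural induction, reducing an arbitrary connected graph $G$ with $m$ edges to pieces for which antimagic orientations are already known. First I would dispose of the two extreme regimes using the cited results: when $G$ is dense or regular the known theorems apply directly, so the genuinely open territory is the class of sparse, highly irregular connected graphs. For these I would fix a spanning tree $T$ of $G$ together with the set $F = E(G)\setminus E(T)$ of non-tree edges, and build the orientation and labeling in two layers, a ``tree layer'' and a ``cycle layer''. The guiding principle is to make the tree layer carry a clean, strictly monotone set of vertex-sums, and then treat the arcs of $F$ as controlled perturbations.

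Concretely, my first step would be to root $T$ and process its vertices in a bottom-up (post-order) traversal, assigning the smallest available labels to arcs deep in the tree and progressively larger labels to arcs near the root, and orienting each tree edge so that the partial vertex-sums increase as one moves toward the root. This is exactly the mechanism that makes trees, caterpillars and lobsters tractable, and it guarantees that the tree alone produces distinct vertex-sums. The second step is to insert the edges of $F$ one at a time: I would reserve a block of the largest labels in $[m]$ for $F$, choosing the orientation and label of each non-tree arc so that a single such arc dominates the tree contribution at its two endpoints. This forces every endpoint of $F$ into a magnitude window disjoint from that of the tree-only vertices, so no collision can occur between a vertex touched by $F$ and one that is not.

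The third and hardest step is to resolve collisions \emph{among} the endpoints of $F$ while respecting the global bijection constraint, and here I expect the main obstacle to lie. Because every label in $[m]$ must be used exactly once, the labels spent on $F$ are not free, and because $F$ creates cycles, the increments forced on the two endpoints of a non-tree arc are linked rather than independent; consequently the greedy leaf-to-root scheme that always completes in the tree case can fail. To overcome this I would combine a parity/interval argument, placing the vertex-sums of different degree classes into disjoint residue or magnitude windows, with an algebraic tool such as the Combinatorial Nullstellensatz applied to the product $\prod_{u\ne v}\bigl(s(u)-s(v)\bigr)$ over the at most $\binom{|V(G)|}{2}$ pairs. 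The serious technical hurdle is that the Nullstellensatz yields distinct vertex-sums for \emph{independent} label choices but not automatically for a bijection, so a separate exchange or rounding argument is needed to convert an independent-choice solution into a genuine permutation of $[m]$. Making this conversion work uniformly across all degree profiles is the crux, and it is precisely the difficulty that keeps the conjecture open in full generality: the known results for trees, regular, and dense graphs each succeed because one of the two obstacles — the cycle-coupling or the bijection constraint — is absent or trivially handled, whereas in the general case both are present simultaneously.
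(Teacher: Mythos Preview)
The statement you are trying to prove is labelled a \emph{Conjecture} in the paper, and the paper does not prove it; it only establishes the special case of lobsters (Theorem~\ref{Thm}). So there is no ``paper's own proof'' to compare against, and any complete argument here would in fact resolve an open problem.

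Your proposal is an honest strategy outline, not a proof, and you correctly identify where it breaks down. The tree layer and the reservation of large labels for the non-tree edges $F$ are reasonable heuristics, but the third step is a genuine gap rather than a technicality. The Combinatorial Nullstellensatz over $\prod_{u\ne v}(s(u)-s(v))$ requires independent choices of labels from prescribed lists, whereas an antimagic labeling demands a bijection onto $[m]$; you note this, but offer no mechanism for the conversion. The ``exchange or rounding argument'' you invoke is exactly the missing idea, and no one currently knows how to supply it in general. Moreover, reserving the largest labels for $F$ does not by itself separate the $F$-endpoints from one another: two endpoints of different $F$-arcs can land in the same magnitude window, and the cycle-coupling you mention means you cannot adjust one without moving the other. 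The parity/residue-window idea is also underdetermined, since a sparse irregular graph can have arbitrarily many degree classes with only $m$ labels to distribute, so disjoint windows need not exist.

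In short, what you have written is a plausible research plan with the hard part explicitly left open, which is consistent with the statement's status as a conjecture; it is not a proof, and the paper makes no claim to one.
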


In the same paper, Hefetz, M\"{u}tze and Schwartz proved that Conjecture \ref{orientation-conj} is affirmative for some classes of %
graphs, such as  stars, wheels, cliques, and dense graphs (graphs of order $n$ with minimum degree at least $C\log n$ for an absolute
constant $C$); in fact, the authors proved a stronger result that every orientation of these graphs is antimagic. Recently, Conjecture \ref{orientation-conj} has been verified  for regular graphs \cite{HMS2010,LSWYZ2019,Y2019,SH2019}, biregular bipartite graphs with minimum degree at least two \cite{SY2017}, Halin graphs \cite{YCZ2019}, and graphs with large maximum degree \cite{YCOP2019}. Trees are widely investigated for graph labeling problems. For antimagic orientation,  it is proved that Conjecture \ref{orientation-conj} is true for caterpillars \cite{L2018} and  complete $k$-ary trees \cite{SH2019tree}.

It is easy to observe that all antimagic
bipartite graphs admit an antimagic orientation where all edges are oriented
in the same direction between the partite sets. Therefore, all subclasses
of trees that are known to be antimagic admit antimagic orientations.
A \emph{lobster}  is a tree such that the removal of its leaves produces a caterpillar.
It is still unknown that if every
lobster is antimagic.
In this paper, we prove that Conjecture \ref{orientation-conj} is affirmative for every lobster.

\begin{thm}\label{Thm} Every lobster  admits an antimagic orientation.
\end{thm}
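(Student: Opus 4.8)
The plan is to orient the pendant edges of $L$ toward their neighbours, the edges joining the spine to its non-leaf off-spine neighbours toward the spine, and the spine edges in a controlled way, and to split $[m]$ into three consecutive blocks --- the top block for the pendant edges, a middle block for the spine-to-branch edges, and the bottom block for the spine edges --- so that leaves, ``branch'' vertices and spine vertices receive vertex-sums in essentially disjoint ranges.

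First reduce to the essential case: every caterpillar is a lobster, so if $L$ is a caterpillar we are done by~\cite{L2018}; hence assume $L$ is not a caterpillar, so that the caterpillar $L'$ obtained by deleting the leaves of $L$ is not a path and $L$ has a spine $u_1u_2\cdots u_k$. For each $i$ let $L_i$ be the set of leaves of $L$ at $u_i$, let $B_i$ be the set of non-leaf off-spine neighbours of $u_i$, and for $b\in B_i$ let $C_b\ne\emptyset$ be its set of leaf-children; set $t=|\bigcup_i L_i|+\sum_b |C_b|$ and $\beta=\sum_i|B_i|$, so that $m=t+\beta+(k-1)$. Orient every pendant edge of $L$ from its leaf toward its neighbour, orient each edge $bu_i$ (with $b\in B_i$) from $b$ to $u_i$, and orient the spine edges consistently along each maximal run of degree-$2$ spine vertices (the precise orientations, together with a few reversals, are fixed below). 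Assign the $t$ pendant edges the top labels $[\beta+k,m]$, the $\beta$ edges $bu_i$ the labels $[k,\beta+k-1]$, and the $k-1$ spine edges the bottom labels $[1,k-1]$. Then every leaf $\ell$ with neighbour $p$ has $s(\ell)=-\tau(\ell p)$, and these are exactly the $t$ distinct values $-m,-m+1,\dots,-(\beta+k)$; so it remains only to make the vertex-sums of the branch vertices and of the spine vertices pairwise distinct and disjoint from the interval $[-m,-(\beta+k)]$.

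This last step is the main obstacle. Within the top block one distributes the labels among the sets $C_b$ so that the subset sums $\sigma_b:=\sum_{\ell\in C_b}\tau(\ell b)$ are spread apart by more than $\beta$ whenever two of the $C_b$'s have different sizes; since $s(b)=\sigma_b-\tau(bu_i)$ with $\tau(bu_i)\in[k,\beta+k-1]$, this already separates branch vertices whose $C_b$'s have distinct sizes, and the remaining branch vertices (notably those with a single leaf-child, whose sums can be as small as $1$) are separated by the choice of bijection on the middle block and, where needed, by reversing the orientation of a few edges $bu_i$. The genuinely delicate part is the spine: an interior degree-$2$ spine vertex $u_i$ has $s(u_i)=\tau(e_{i-1})-\tau(e_i)$ for its two incident spine edges $e_{i-1},e_i$, and a zigzag assignment of the bottom-block labels along each run makes these sums pairwise distinct within the run and of absolute value at most $k-1$ (hence disjoint from the leaf sums); but one must still keep them apart across different runs, apart from the small branch-vertex sums, and apart from the sums at the spine endpoints $u_1,u_k$ and at the branching spine vertices. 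Finally, the degenerate families --- lobsters with $k=1$, and lobsters in which every branch vertex has exactly one leaf-child (which behave like caterpillars with pendant paths of length two) --- require separate arguments, for which the caterpillar construction of~\cite{L2018} is the natural template. I expect the body of the proof to consist precisely of this bookkeeping: fixing the within-block bijections and the handful of edge reversals, verifying the stated disjointness, and a finite case analysis for the spine and for the degenerate lobsters.
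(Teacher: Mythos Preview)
Your proposal is an outline rather than a proof: you correctly locate the main difficulty in the spine labeling but do not carry it out, and the paper shows that this is where essentially all the technical work lies. The paper's Lemma~\ref{lemma1} does precisely what you sketch as a ``zigzag'': given the spine $P$ and the set $U$ of its high-degree vertices, it produces an orientation and a bijection onto $[|E(P)|]$ so that every vertex of $U$ has positive partial sum, every other spine vertex has $1\le |s(v)|\le |E(P)|$, and the non-$U$ spine vertices have pairwise distinct sums. Your per-run zigzag gives distinctness within a single maximal degree-$2$ run, but distinctness \emph{across} runs, together with positivity at every $U$-vertex, requires a genuine case analysis (the paper's lemma splits into roughly a dozen subcases depending on the parities and lengths of the two end-runs) that you have deferred rather than solved.

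Your orientation choice also creates an avoidable collision that the paper's does not. By directing leaf $\to$ branch $\to$ spine you get $s(b)=\sigma_b-\tau(bu_i)$ for a branch vertex $b$, and when $|C_b|=1$ this can be as small as $1$ and so land among the degree-$2$ spine sums; you propose to fix this by ``reversing the orientation of a few edges $bu_i$'', but each such reversal also shifts $s(u_i)$ by $-2\tau(bu_i)$, and you give no argument that these repairs can be made simultaneously consistent with everything else. The paper sidesteps the issue entirely by orienting \emph{every} edge at an off-spine vertex $x\in X$ \emph{away} from $x$ --- toward the spine \emph{and} toward its leaf-children. Then $s(x)<0$ with $|s(x)|\ge p+|Y|+1$, automatically disjoint from all spine sums (which satisfy $|s|\le p$ outside $U$, and $s>p+|Y|$ on $U$) and from all leaf sums (which lie in $[p+1,p+|Y|]$); the remaining within-class separation of the $X$-vertices and of the $U$-vertices is handled by reserving two matchings and labeling them greedily after sorting by partial sum. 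That single orientation trick is the idea your outline is missing.
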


\section{Preliminary Lemmas}

In this section, we prove two lemmas that will be used in proving Theorem~\ref{Thm}.

%

\begin{lem}\label{lemma3}  Every path with at least two edges is antimagic and every path admits an antimagic orientation.
\end{lem}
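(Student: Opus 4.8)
The plan is to treat the two assertions separately: the first by exhibiting an explicit antimagic labeling, and the second by feeding that labeling into the orientation remark recalled in the introduction (an antimagic bipartite graph with no isolated vertex, oriented so that every edge goes from one part to the other, is antimagically orientable).

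Write the path as $v_0v_1\cdots v_n$ with $e_i=v_{i-1}v_i$ for $i\in[n]$, and first suppose $n\ge 2$. When $n$ is even I would simply take the consecutive labeling $\tau(e_i)=i$; then $s(v_0)=1$, $s(v_i)=2i+1$ for $1\le i\le n-1$, and $s(v_n)=n$, so the vertex-sums are $1$, the odd integers $3,5,\dots,2n-1$, and the even integer $n$, which are pairwise distinct. When $n$ is odd ($n\ge 3$) this labeling fails only because $s(v_{(n-1)/2})=n=s(v_n)$, so I would repair it near the far end: keep $\tau(e_i)=i$ for $i\le n-2$ but put $\tau(e_{n-1})=n$ and $\tau(e_n)=n-1$. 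A direct computation then gives $s(v_0)=1$, $s(v_i)=2i+1$ for $1\le i\le n-3$, $s(v_{n-2})=2n-2$, $s(v_{n-1})=2n-1$, and $s(v_n)=n-1$; the odd values $1,3,\dots,2n-5,2n-1$ are pairwise distinct, the even values $2n-2$ and $n-1$ are distinct from one another and from every odd value, so all $n+1$ vertex-sums differ. (Alternatively, one may just cite \cite{LMS2019}, since a path with at least two edges is a caterpillar.) This settles the first assertion.

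For the second assertion, observe that every path is bipartite and, having at least one edge, has no isolated vertex. If the path has at least two edges, take the antimagic labeling $\tau$ above, orient every edge from its even-indexed endpoint to its odd-indexed endpoint, and let $\sigma(v_i)$ denote the (positive) edge-sum of $v_i$ under $\tau$; then $v_i$ has vertex-sum $-\sigma(v_i)<0$ when $i$ is even and $+\sigma(v_i)>0$ when $i$ is odd. Distinctness of the edge-sums of $\tau$ forces distinctness of the vertex-sums within each part, and the sign separation distinguishes vertices lying in different parts, so the orientation together with $\tau$ is antimagic. The only remaining path is $K_2$: orient its single edge arbitrarily and label it $1$, producing vertex-sums $-1$ and $1$.

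The only mildly delicate point is the odd case of the first assertion: the obvious consecutive labeling collides at the midpoint vertex, so one must check that the small local modification at the end of the path introduces no new collision; this amounts to the parity bookkeeping carried out above, and everything else is immediate from the definitions and the bipartite orientation remark.
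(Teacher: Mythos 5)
Your proof is correct and follows essentially the same route as the paper: the identical consecutive labeling $1,2,\dots,m$ for even length and $1,2,\dots,m-2,m,m-1$ for odd length, combined with the bipartite-orientation remark and a separate treatment of $K_2$. The only difference is that you explicitly verify the vertex-sum computations that the paper dismisses as clear, which is a welcome addition rather than a deviation.
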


\begin{proof}
	Since $K_2$ admits an antimagic orientation, and every antimagic bipartite graph admits an antimagic orientation, it suffices to prove the first part of the statement.
	Let $m\ge 2$ be an integer and  $P=v_0v_1\cdots v_m$ be a path. We define a labeling of $P$ as follows.
	
	If  $m$ is even, starting from the edge incident to $v_0$
	consecutively assign labels
	$$
	1,2,3, \cdots, m-1, m.
	$$
	
	%
	%
	%
	
	If $m$ is odd,
	starting from the edge incident to $v_0$
	consecutively assign labels
	$$
	1,2,3, \cdots, m-2,  m, m-1.
	$$
	
	It is clear that the assignment is an antimagic labeling of $P$.
\end{proof}

Let $P$ be a path and $u,v\in V(P)$. We denote by $P[u,v]$  the subpath of $P$ with one end $u$ and the other end $v$.   The following lemma will be used for giving an antimagic orientation for the central path of a lobster, where vertices in $U$ are corresponding to the vertices of degree at least three in the lobster.

\begin{lem}\label{lemma1} Let $m\ge 2$ be an integer,  $P=v_0v_1\cdots v_m$ be a path, and $U=\{v_{h_1}, v_{h_2},\cdots,v_{h_t}\}\subseteq V(P)$ with $|U|\ge 1$, where $h_1<h_2<\cdots< h_t$. Then $P$ has an orientation $\overrightarrow{P}$ and a labeling $\tau:A(\overrightarrow{P}) \rightarrow [m]$ such that
\begin{enumerate}[$(i)$]
	\item $s(v)\geq1$ for any vertex $v\in U$;
	\item  $1\leq |s(v)|\leq m$ for every vertex $v\in V(P)\setminus U$; and
	\item $s(u)\neq s(v)$ for any two distinct vertices $u,v\in V(P)\setminus U$.
\end{enumerate}
\end{lem}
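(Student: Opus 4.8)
The plan is to orient $P$ consistently, with every arc pointing from $v_{i-1}$ to $v_i$, and then to choose $\tau$ carefully. Writing $\pi(i)=\tau(v_{i-1}v_i)$, so that $\pi$ is a bijection $[m]\to[m]$, under this orientation we have $s(v_0)=-\pi(1)$, $s(v_m)=\pi(m)$, and $s(v_i)=\pi(i)-\pi(i+1)$ for $1\le i\le m-1$. Two observations reduce the statement to a purely combinatorial question about $\pi$. First, $|s(v)|\le m$ holds for every vertex (indeed $|s(v_i)|\le m-1$ for interior $i$), and each of these sums is nonzero since $\pi$ is injective and $\pi(1),\pi(m)\ge 1$; so $(ii)$ is automatic. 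Second, for an interior $v_i\in U$ the requirement $s(v_i)\ge 1$ is exactly $\pi(i)>\pi(i+1)$, while at the ends $(i)$ imposes nothing once we use that $v_0\notin U$ --- which we may assume in the application, where $P$ is taken with its two ends being leaves of the lobster --- and if $v_m\in U$ then $s(v_m)=\pi(m)\ge 1$ automatically. So it suffices to produce a bijection $\pi\colon[m]\to[m]$ having a descent at every index $i\in[m-1]$ with $v_i\in U$, and such that $-\pi(1)$, the differences $\pi(i)-\pi(i+1)$ over the indices $i\in[m-1]$ with $v_i\notin U$, and (if $v_m\notin U$) $\pi(m)$, are pairwise distinct.

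To build $\pi$, I would first isolate the maximal runs of consecutive indices $i$ with $v_i\in U$; each such run, together with the index immediately to its right, forms a block of consecutive positions on which $\pi$ is forced to be strictly decreasing, and these blocks are pairwise disjoint subsets of $[1,m]$. Onto each block I would put a string of consecutive integers in decreasing order, which makes $s(v)=1$ for every $v$ in the corresponding $U$-run --- harmless, since $(iii)$ concerns only vertices outside $U$. The remaining (``free'') positions receive the remaining labels, assigned in the style of the zig-zag permutation $(m,1,m-1,2,m-2,3,\dots)$: sweeping $P$ from left to right, draw the next label alternately from the largest and the smallest still-unused value, arranging that the successive differences $\pi(i)-\pi(i+1)$ created at free (non-$U$) positions are pairwise distinct (for instance by keeping their absolute values strictly monotone along the sweep) and that they avoid the end values $-\pi(1)<0$ and $\pi(m)>0$.

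The crux, and the step I expect to be most delicate, is this final distinctness bookkeeping. Once the $U$-runs have carved $[m]$ into several decreasing blocks, the labels remaining for the free positions no longer form an interval, and the sum at a vertex wedged between a block and a free position combines an extremal label of that block with a free label; all of these sums, the purely-free differences, and the two end values must come out pairwise distinct at once. I would manage this by fixing, block by block from the outside in, which end of the shrinking label pool each free position draws from, and then finishing with a short case check according to whether $v_1\in U$, whether $v_{m-1}\in U$, and the parity of the number of free positions, plus a direct verification of a few small or degenerate configurations (such as $m=2$, or $U$ equal to the entire interior of $P$).
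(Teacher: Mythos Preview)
Your approach differs from the paper's in one structural way: you orient every arc in the same direction $v_{i-1}\to v_i$, whereas the paper orients both ends of $P$ toward $v_{h_t}$ (so that $s(v_{h_t})$ is automatically a sum of two positive labels, and the two end-segments $P_0=P[v_0,v_{h_1}]$ and $P_t=P[v_{h_t},v_m]$ can be labelled symmetrically). Both choices share the same blind spot in the lemma as literally stated --- neither can make $s(v_0)\ge 1$ when $v_0\in U$ --- and both are fine for the intended application, where the spine endpoints are leaves.

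The genuine gap is exactly where you flag it: the distinctness bookkeeping is not done, and your proposed scheme does not work as described. Take $m=6$ and $U=\{v_2,v_4\}$, so the two blocks are positions $\{2,3\}$ and $\{4,5\}$ and the free positions are $1$ and $6$. If the blocks receive the ``natural'' top consecutive strings $\{6,5\}$ and $\{4,3\}$ (in either order), then for every choice of $\pi(1),\pi(6)\in\{1,2\}$ two of the non-$U$ sums collide; a valid assignment does exist, e.g.\ $\pi=(6,2,1,5,4,3)$, but it requires placing the strings $\{2,1\}$ and $\{5,4\}$ on the blocks, i.e.\ coordinating \emph{which} interval of consecutive integers lands on \emph{which} block with the free labels, not merely shuffling the free labels after the fact. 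Your remedy (``fix block by block from the outside in, then a short case check'') is a hope, not an argument, and the paper's own proof suggests the check is not short: it handles the analogous step with an explicit thirteen-way case split on the parities and sizes of the end-segments, followed by a separate treatment of the last internal segment $P_{t-1}$. Until you actually specify the block/free interleaving and verify the resulting sums are distinct, the proof is incomplete.
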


\begin{proof}
	We orient $P$ so that  $P[v_0,v_{h_t}]$ is a directed path from $v_0$ to $v_{h_t}$ and
	$P[v_m,v_{h_t}]$ is a directed path from $v_m$ to $v_{h_t}$. Denote the orientation by
	 $\overrightarrow{P}$. Define
$$P_0=P[v_0,v_{h_1}], \quad  P_i=P[v_{h_i}, v_{h_{i+1}}] \quad \text{for each} \quad i\in [t-1], \quad P_{t}=P[v_{h_t}, v_m]$$
to be the set of $t+1$ subpaths of $P$ separated by vertices in $U$.

By reversing  the orientation and the names of the vertices of $P$ if necessary,  we  assume that $h_1\ge m-h_t$
or $|V(P_0)|\ge |V(P_t)|$.
Let $$\ell=m-h_t \quad \text{and}  \quad s=h_1-\ell.$$ We label $P$ according to the parities of $\ell$ and $s$.

\begin{enumerate}[Step 1]
	\item Assign labels to edges  of  $P_t$ starting from the edge incident to $v_m$ consecutively using respectively the following labels:
	
	$
	\begin{cases}
	m-1, 2, m-3, 4, \cdots, m-\ell+1, \ell, & \text{if  $\ell$ is even};\\
	m-1, 2, m-3, 4, \cdots, m-\ell+2, \ell-1, m-\ell, & \text{if  $\ell$ is odd}.
	\end{cases}
	$
	
	\item
	Assign labels to edges  of  $P_0$ starting from the edge incident to $v_0$ consecutively using respectively the following labels:	
		$$
	\begin{cases}
	m, 1, m-2, 3, \cdots, m-\ell+2, \ell-1, m-\ell, & \text{if  $\ell$ is even and $s=1$};\\
m, 1, m-2, 3, \cdots, m-\ell+2, \ell-1, m-\ell,\\ \ell+1, m-\ell-1, \ell+2,m-\ell-2,  \cdots,\\ \ell+\lfloor \frac{s}{2}\rfloor, m-\ell-\lfloor \frac{s}{2}\rfloor, & \text{if  $\ell$ is even, $s$ is odd and $s\ge 3$};\\
 m, m-2, & \text{if  $\ell=2$  and $s=0$};\\
m, 1, m-2, 3, \cdots, \ell-3, m-\ell+2, m-\ell, & \text{if  $\ell\ge 4$ is even and $s=0$};\\
 m, 1, m-2, 3, \cdots, \ell-3, m-\ell+2, \ell-1,\\ m-\ell,  m-\ell-1, & \text{if  $\ell$ is even and $s=2$};\\
 m, 1, m-2, 3, \cdots, m-\ell+2, \ell-1, m-\ell,\\ \ell+1, m-\ell-1,  \ell+2, m-\ell-2, \cdots,\\ \ell+ \frac{s}{2} -1, m-\ell- \frac{s}{2}+1, m-\ell- \frac{s}{2}, & \text{if  $\ell$ is even, $s$ is even and $s\ge 4$};\\
 m, 1, m-2, 3, \cdots, m-\ell+3, \ell-2, m-\ell+1, & \text{if  $\ell$ is odd and $s=0$};\\
 m, 1, m-2, 3, \cdots, m-\ell+3, \ell-2, m-\ell+1,\\ \ell, m-\ell-1, & \text{if  $\ell$ is odd and $s=2$};\\
 m, 1, m-2, 3, \cdots, m-\ell+3, \ell-2, m-\ell+1,\\ \ell, m-\ell-1, \ell+1,  m-\ell-2, \cdots,\\ \ell+\frac{s}{2}-1, m-\ell- \frac{s}{2}, & \text{if  $\ell$ is odd, $s$ is even and $s\ge 4$};\\
 m, m-2, & \text{if  $\ell=1$ and $s=1$};\\
  m, 1, m-2, 3, \cdots, \ell-4, m-\ell+3, \ell-2,\\ m-\ell+1, m-\ell-1, & \text{if  $\ell\ge 3$ is odd and $s=1$};\\
 m, 1, m-2, 3, \cdots, m-\ell+3, \ell-2, m-\ell+1,\\ \ell, m-\ell-1, m-\ell-2, & \text{if  $\ell$ is odd and $s=3$};\\
  m, 1, m-2, 3, \cdots, m-\ell+3, \ell-2, m-\ell+1, \\ \ell, m-\ell-1, \ell+1, m-\ell-2, \cdots, \ell+\lfloor \frac{s}{2}\rfloor-1, \\  m-\ell-\lfloor \frac{s}{2}\rfloor, m-\ell-\lfloor \frac{s}{2}\rfloor-1, & \text{if  $\ell$ is odd, $s$ is odd and $s\ge 5$}.
	\end{cases}
	$$
\item 	For each  $i\in [t-2]$, starting from 1 consecutively, assign labels to edges of $P_i$ in the following way:
starting from the edge closest to $v_m$  consecutively using alternatively the largest available label
and the  smallest available label.

\item For the path $P_{t-1}$, if it has only one edge, assign the last available label to the edge.
Thus, we assume that $P_{t-1}$ has at least two edges.
We assign labels to edges of $P_{t-1}$ in the same fashion as in Step 3 unless:
(a) $\ell$ is even, $s \ge 2$ is even, and $|E(P_{t-1})|$ is odd; (b)  $\ell$ is odd, $s$ is odd and $s\ge 3$, and $|E(P_{t-1})|$ is odd, or (c) $\ell$ is even and $s=0$ or $\ell$ is odd and $s=1$,   and $|E(P_{t-1})|\ge 4$ is even.
In all these cases, assign labels  starting from the edge closest to $v_0$ consecutively using alternatively   the largest available label
and the  smallest available label.

\end{enumerate}
See Figure~\ref{fig1} for an illustration of the labeling given above. Denote the labeling described above by $\tau$. We show that $\tau$
satisfies the three properties in Lemma~\ref{lemma1}.

 \begin{figure}
	\begin{center}
		
		\begin{tikzpicture}
		\foreach \i in {-1,0,...,12}
		{		
			\node[draw, circle,fill=white,minimum size=6pt, inner sep=0pt]
			(v\i) at (\i,0) {};	
		}

\node[draw, circle,fill=black,minimum size=6pt, inner sep=0pt] at (2,0) {};	
\node[draw, circle,fill=black,minimum size=6pt, inner sep=0pt] at (3,0) {};	
\node[draw, circle,fill=black,minimum size=6pt, inner sep=0pt] at (6,0) {};
\node[draw, circle,fill=black,minimum size=6pt, inner sep=0pt] at (8,0) {};
\node[draw, circle,fill=black,minimum size=6pt, inner sep=0pt] at (9,0) {};
\node[draw, circle,fill=black,minimum size=6pt, inner sep=0pt] at (10,0) {};

		\foreach \i in {-1,0,...,8,9}
		{	
			\pgfmathint{\i + 1}
			\edef\j{\pgfmathresult}
			\draw[->] (v\i) edge (v\j);
		}
		
		\foreach \i in {10,11}
		{	
			\pgfmathint{\i + 1}
			\edef\j{\pgfmathresult}
			\draw[<-] (v\i) edge (v\j);
		}
		
		\node  at (-1,-0.5) {$v_{0}$};
		\node  at (0,-0.5) {$v_{1}$};
		\node at (2,-0.5) {$v_{h_1}$};
		\node at (3,-0.5) {$v_{h_2}$};
		\node at (6,-0.5) {$v_{h_3}$};
		\node at (8,-0.5) {$v_{h_4}$};
		\node at (9,-0.5) {$v_{h_5}$};
		\node at (10,-0.5) {$v_{h_6}$};
		\node at (12,-0.5) {$v_{13}$};
		\path[draw,thick,black!60!green]
		
		(v-1) edge node[name=la,pos=0.5, above] {\color{blue} $13$} (v0)
		(v0) edge node[name=la,pos=0.5, above] {\color{blue} $1$} (v1)
		(v1) edge node[name=la,pos=0.5, above] {\color{blue}$11$} (v2)
		(v2) edge node[name=la,pos=0.5, above] {\color{blue} $10$} (v3)
		(v3) edge node[name=la,pos=0.5, above] {\color{blue} $8$} (v4)
		(v4) edge node[name=la,pos=0.5, above] {\color{blue} $3$} (v5)
		(v5) edge node[name=la,pos=0.5, above] {\color{blue} $9$} (v6)
		(v6) edge node[name=la,pos=0.5, above] {\color{blue} $4$} (v7)
		(v7) edge node[name=la,pos=0.5, above] {\color{blue} $7$} (v8)
		(v8) edge node[name=la,pos=0.5, above] {\color{blue} $6$} (v9)
		(v9) edge node[name=la,pos=0.5, above] {\color{blue} $5$} (v10)
		(v10) edge node[name=la,pos=0.5, above] {\color{blue} $2$} (v11)
		(v11) edge node[name=la,pos=0.5, above] {\color{blue} $12$} (v12);
		
		\node  at (6,-1.5) {Labeling of $\overrightarrow{P}$ when $\ell$ is even and $s=1$};
		
		\end{tikzpicture}
		
		\smallskip
		
		\begin{tikzpicture}
		\foreach \i in {-1,0,...,12}
		{
			
			\node[draw, circle,fill=white,minimum size=6pt, inner sep=0pt]
			(v\i) at (\i,0) {};
			
		}

\node[draw, circle,fill=black,minimum size=6pt, inner sep=0pt] at (3,0) {};	
\node[draw, circle,fill=black,minimum size=6pt, inner sep=0pt] at (4,0) {};	
\node[draw, circle,fill=black,minimum size=6pt, inner sep=0pt] at (6,0) {};
\node[draw, circle,fill=black,minimum size=6pt, inner sep=0pt] at (7,0) {};
\node[draw, circle,fill=black,minimum size=6pt, inner sep=0pt] at (10,0) {};

		\foreach \i in {-1,0,...,8,9}
		{	
			\pgfmathint{\i + 1}
			\edef\j{\pgfmathresult}
			\draw[->] (v\i) edge (v\j);
		}

		\foreach \i in {10,11}
		{	
			\pgfmathint{\i + 1}
			\edef\j{\pgfmathresult}
			\draw[<-] (v\i) edge (v\j);
		}
		
		\node  at (-1,-0.5) {$v_{0}$};
		\node  at (0,-0.5) {$v_{1}$};
		\node at (4,-0.5) {$v_{h_2}$};
		\node at (3,-0.5) {$v_{h_1}$};
		\node at (6,-0.5) {$v_{h_3}$};
		\node at (7,-0.5) {$v_{h_4}$};
		\node at (10,-0.5) {$v_{h_5}$};
		\node at (12,-0.5) {$v_{13}$};
		\path[draw,thick,black!60!green]
		
		(v-1) edge node[name=la,pos=0.5, above] {\color{blue} $13$} (v0)
		(v0) edge node[name=la,pos=0.5, above] {\color{blue} $1$} (v1)
		(v1) edge node[name=la,pos=0.5, above] {\color{blue}$11$} (v2)
		(v2) edge node[name=la,pos=0.5, above] {\color{blue} $10$} (v3)
		(v3) edge node[name=la,pos=0.5, above] {\color{blue} $9$} (v4)
		(v4) edge node[name=la,pos=0.5, above] {\color{blue} $3$} (v5)
		(v5) edge node[name=la,pos=0.5, above] {\color{blue} $8$} (v6)
		(v6) edge node[name=la,pos=0.5, above] {\color{blue} $7$} (v7)
		(v7) edge node[name=la,pos=0.5, above] {\color{blue} $6$} (v8)
		(v8) edge node[name=la,pos=0.5, above] {\color{blue} $4$} (v9)
		(v9) edge node[name=la,pos=0.5, above] {\color{blue} $5$} (v10)
		(v10) edge node[name=la,pos=0.5, above] {\color{blue} $2$} (v11)
		(v11) edge node[name=la,pos=0.5, above] {\color{blue} $12$} (v12);
		
		\node  at (6,-1.5) {Labeling of $\overrightarrow{P}$ when $\ell$ is even and $s=2$};
		
		\end{tikzpicture}
		
		\smallskip
		
		\begin{tikzpicture}
		\foreach \i in {0,...,13}
		{		
			\node[draw, circle,fill=white,minimum size=6pt, inner sep=0pt]
			(v\i) at (\i,0) {};
			
		}

\node[draw, circle,fill=black,minimum size=6pt, inner sep=0pt] at (3,0) {};	
\node[draw, circle,fill=black,minimum size=6pt, inner sep=0pt] at (5,0) {};	
\node[draw, circle,fill=black,minimum size=6pt, inner sep=0pt] at (6,0) {};
\node[draw, circle,fill=black,minimum size=6pt, inner sep=0pt] at (7,0) {};
\node[draw, circle,fill=black,minimum size=6pt, inner sep=0pt] at (10,0) {};

		\foreach \i in {0,...,9}
		{	
			\pgfmathint{\i + 1}
			\edef\j{\pgfmathresult}
			\draw[->] (v\i) edge (v\j);
		}

		\foreach \i in {10,11,12}
		{	
			\pgfmathint{\i + 1}
			\edef\j{\pgfmathresult}
			\draw[<- ] (v\i) edge (v\j);
		}
		
		\node  at (0,-0.5) {$v_{0}$};
		\node at (3,-0.5) {$v_{h_1}$};
		\node at (5,-0.5) {$v_{h_2}$};
		\node at (6,-0.5) {$v_{h_3}$};
		\node at (7,-0.5) {$v_{h_4}$};
		\node at (10,-0.5) {$v_{h_5}$};
		
		\node at (13,-0.5) {$v_{13}$};
		\path[draw,thick,black!60!green]
		
		(v0) edge node[name=la,pos=0.5, above] {\color{blue} $13$} (v1)
		(v1) edge node[name=la,pos=0.5, above] {\color{blue}$1$} (v2)
		(v2) edge node[name=la,pos=0.5, above] {\color{blue} $11$} (v3)
		(v3) edge node[name=la,pos=0.5, above] {\color{blue} $3$} (v4)
		(v4) edge node[name=la,pos=0.5, above] {\color{blue} $9$} (v5)
		(v5) edge node[name=la,pos=0.5, above] {\color{blue} $8$} (v6)
		(v6) edge node[name=la,pos=0.5, above] {\color{blue} $7$} (v7)
		(v7) edge node[name=la,pos=0.5, above] {\color{blue} $5$} (v8)
		(v8) edge node[name=la,pos=0.5, above] {\color{blue} $4$} (v9)
		(v9) edge node[name=la,pos=0.5, above] {\color{blue} $6$} (v10)
		(v10) edge node[name=la,pos=0.5, above] {\color{blue} $10$} (v11)
		(v11) edge node[name=la,pos=0.5, above] {\color{blue} $2$} (v12)
		(v12) edge node[name=la,pos=0.5, above] {\color{blue} $12$} (v13);
		
		\node  at (7,-1.5) {Labeling of $\overrightarrow{P}$ when $\ell$ is odd and $s=0$};
		\end{tikzpicture}

		\smallskip
		
		\begin{tikzpicture}
		\foreach \i in {0,...,13}
		{			
			\node[draw, circle,fill=white,minimum size=6pt, inner sep=0pt]
			(v\i) at (\i,0) {};			
		}

\node[draw, circle,fill=black,minimum size=6pt, inner sep=0pt] at (4,0) {};	
\node[draw, circle,fill=black,minimum size=6pt, inner sep=0pt] at (5,0) {};	
\node[draw, circle,fill=black,minimum size=6pt, inner sep=0pt] at (6,0) {};
\node[draw, circle,fill=black,minimum size=6pt, inner sep=0pt] at (7,0) {};
\node[draw, circle,fill=black,minimum size=6pt, inner sep=0pt] at (10,0) {};

		\foreach \i in {0,...,9}
		{	
			\pgfmathint{\i + 1}
			\edef\j{\pgfmathresult}
			\draw[->] (v\i) edge (v\j);
		}

		\foreach \i in {10,11,12}
		{	
			\pgfmathint{\i + 1}
			\edef\j{\pgfmathresult}
			\draw[<- ] (v\i) edge (v\j);
		}
		
		\node  at (0,-0.5) {$v_{0}$};
		\node at (4,-0.5) {$v_{h_1}$};
		\node at (5,-0.5) {$v_{h_2}$};
		\node at (6,-0.5) {$v_{h_3}$};
		\node at (7,-0.5) {$v_{h_4}$};
		\node at (10,-0.5) {$v_{h_5}$};
		
		\node at (13,-0.5) {$v_{13}$};
		\path[draw,thick,black!60!green]
		
		(v0) edge node[name=la,pos=0.5, above] {\color{blue} $13$} (v1)
		(v1) edge node[name=la,pos=0.5, above] {\color{blue}$1$} (v2)
		(v2) edge node[name=la,pos=0.5, above] {\color{blue} $11$} (v3)
		(v3) edge node[name=la,pos=0.5, above] {\color{blue} $9$} (v4)
		(v4) edge node[name=la,pos=0.5, above] {\color{blue} $8$} (v5)
		(v5) edge node[name=la,pos=0.5, above] {\color{blue} $7$} (v6)
		(v6) edge node[name=la,pos=0.5, above] {\color{blue} $6$} (v7)
		(v7) edge node[name=la,pos=0.5, above] {\color{blue} $4$} (v8)
		(v8) edge node[name=la,pos=0.5, above] {\color{blue} $3$} (v9)
		(v9) edge node[name=la,pos=0.5, above] {\color{blue} $5$} (v10)
		(v10) edge node[name=la,pos=0.5, above] {\color{blue} $10$} (v11)
		(v11) edge node[name=la,pos=0.5, above] {\color{blue} $2$} (v12)
		(v12) edge node[name=la,pos=0.5, above] {\color{blue} $12$} (v13);
		
		\node  at (7,-1.5) {Labeling of $\overrightarrow{P}$ when $\ell$ is odd and $s=1$};
		\end{tikzpicture}	
		\caption{Labelings of $P$ satisfy requirements of Lemma~\ref{lemma1}}
		\label{fig1}
		
	\end{center}
	
\end{figure}
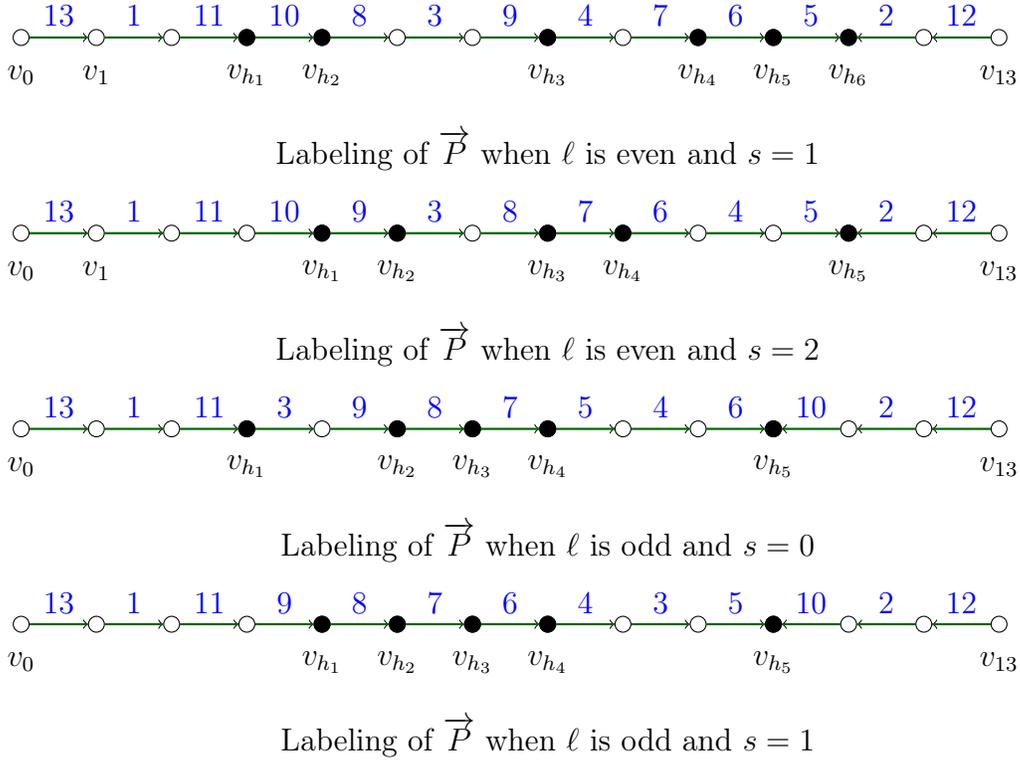

For the vertex $v_{h_t}$, by the orientation of $P$,  we have $v_{h_t-1}v_{h_t}\in A(\overrightarrow{P})$ and $ v_{h_t+1}v_{h_t}\in A(\overrightarrow{P})$, thus $s(v_{h_t})>0$.
By Step 2 and step 3, $\tau(v_{h_1-1}v_{h_1})>\tau(v_{h_1}v_{h_1+1})$. Therefore,
$s(v_{h_1})=\tau(v_{h_1-1}v_{h_1})-\tau(v_{h_1}v_{h_1+1})>0$.
For each $v_{h_i}\in U\setminus \{v_{h_1}, v_{h_t}\}$, by Step 3,
we always have 	$\tau(v_{h_i-1}v_{h_i})>\tau(v_{h_i}v_{h_i+1})$. Therefore,
$s(v_{h_i})=\tau(v_{h_i-1}v_{h_i})-\tau(v_{h_i}v_{h_i+1})>0$. This proves (i).

Statement (ii) is obvious as for each $v_i\in V(P)\setminus U$ with $i\ne 0, m$,
$1\le |s(v_i)|=|\tau(v_{i-1}v_i)-\tau(v_iv_{i+1})|\le m-1$, $s(v_0)=-m$ and $s(v_m)=-(m-1)$.

We now show (iii). Let $v_i,v_j \in V(P)\setminus U$ with $i<j$.
 By Step 1 and Step 2,  for each $k\in [0,t]$,  if $v_i,v_j \in V(P_k)\setminus U$,
we have  $|s(v_i)|< |s(v_j)|$ unless $k=0$ or $k=t-1$ in the three exceptional cases in Step 4.
In these cases, $|s(v_i)|> |s(v_j)|$.  Also, by Step 3 and Step 4,  if $v_i,v_j \in V(P)\setminus (U\cup V(P_0)) $ such that $v_i$ and $v_j$ are from distinct subpaths,
we have  $|s(v_i)|> |s(v_j)|$ unless  $v_j$ is from $V(P_{t})$.
In this case, $|s(v_i)|< |s(v_j)|$.
Thus, we are only left to show that $s(v_i)\ne s(v_j)$ when  $v_i\in V(P_0)$ and $v_j\in V(P)\setminus (U\cup V(P_0)) $. Note that by Step 2, the set of vertex-sums on vertices from $V(P_0)$ is as follows:

$$
\begin{cases}
	\{-m, m-1, -(m-3), m-5, \cdots, m-2\ell+3,\\ -(m-2\ell+1)\}, & \text{if  $\ell$ is even and $s=1$};\\
\{-m, m-1, -(m-3), m-5, \cdots, m-2\ell+3,\\ -(m-2\ell+1), m-2\ell-1, -(m-2\ell-2), m-2\ell-3,\\ \cdots,-(m-2\ell-s+1)\}, & \text{if  $\ell$ is even, $s$ is odd and $s\ge 3$};\\
\{-m,2\}, & \text{if  $\ell=2$  and $s=0$};\\
\{-m, m-1, -(m-3), m-5, \cdots, m-2\ell+7,\\ -(m-2\ell+5), 2\}, & \text{if  $\ell\ge 4$ is even and $s=0$};\\
\{-m, m-1, -(m-3), m-5, \cdots, m-2\ell+3, \\-(m-2\ell+1), 1\}, & \text{if  $\ell$ is even and $s=2$};\\
 \{-m, m-1, -(m-3), m-5, \cdots, m-2\ell+3,\\ -(m-2\ell+1), m-2\ell-1, -(m-2\ell-2), m-2\ell-3,\\ \cdots, m-2\ell-s+3,-(m-2\ell-s+2),1\}, & \text{if  $\ell$ is even, $s$ is even and $s\ge 4$};\\
 \{-m, m-1, -(m-3), m-5, \cdots, m-2\ell+5, \\-(m-2\ell+3)\}, & \text{if  $\ell$ is odd and $s=0$};\\
  \{-m, m-1, -(m-3), m-5, \cdots, m-2\ell+5,\\ -(m-2\ell+3), m-2\ell+1, -(m-2\ell-1)\}, & \text{if  $\ell$ is odd and $s=2$};\\
  \{-m, m-1, -(m-3), m-5, \cdots, m-2\ell+5, \\ -(m-2\ell+3),m-2\ell+1, -(m-2\ell-1), m-2\ell-2,\\-(m-2\ell-3), \cdots, m-2\ell-s+2,-(m-2\ell-s+1)\}, & \text{if  $\ell$ is odd, $s$ is even and $s\ge 4$};\\
 \{-m,2\}, & \text{if  $\ell=1$ and $s=1$};\\
\{-m, m-1, -(m-3), m-5, \cdots, m-2\ell+5, \\-(m-2\ell+3),2\}, & \text{if  $\ell\ge 3$ is odd and $s=1$};\\
\{-m, m-1, -(m-3), m-5, \cdots, m-2\ell+5, \\-(m-2\ell+3), m-2\ell+1, -(m-2\ell-1), 1\}, & \text{if  $\ell$ is odd and $s=3$};\\
  \{-m, m-1, -(m-3), m-5, \cdots, m-2\ell+5,\\ -(m-2\ell+3), m-2\ell+1, -(m-2\ell-1), m-2\ell-2,\\-(m-2\ell-3), \cdots,-(m-2\ell-s+2),1\}, & \text{if  $\ell$ is odd, $s$ is odd and $s\ge 5$}.
	\end{cases}
	$$

We consider two cases regarding where the vertex $v_j$ is.

{\bf \noindent Case 1:   $v_j\in V(P_t) $.}

By Step 1, if $\ell $ is even, then  $s(v_j)\in \{-(m-1), m-3, -(m-5), m-7, \cdots, -(m-2\ell+3), m-2\ell+1\}$; and
if $\ell$ is odd, then $s(v_j)\in \{-(m-1), m-3, -(m-5), m-7, \cdots, (m-2\ell+3), -(m-2\ell+1)\}$.
Since the value $s(v_i)=-(m-2\ell+1)$ is only achieved when $\ell$ is even, we see that
 $s(v_j)\ne s(v_i)$ in either case.

{\bf \noindent Case 2:   $v_j\in V(P_k) $ for some $k\in [t-1]$.}

By Steps 3 and 4,  $|s(v_j)|\le m-2\ell -s<m-2\ell-s+1$.
Thus, $s(v_j)\ne s(v_i)$ if $ |s(v_i)|\ge m-2\ell-s+1$.
Hence, we assume that $s(v_i)\in \{1,2\}$, which in turn implies that $h_1$ is even.
By the labeling of the three exceptional cases in Step 4, when $s(v_i)=1$,  we have $|s(v_j)|\ge 2$ or $s(v_j)=-1$;
and when $s(v_i)=2$,  we have $|s(v_j)|\ge 3$,  $s(v_j)=1$
or $s(v_j)=-2$. In any cases, we get $s(v_j)\ne s(v_i)$.
This finishes the proof for (iii).
\end{proof}

\section{Proof of Theorem \ref{Thm}}
Let $T$ be a lobster. A \emph{leaf} of $T$ is a vertex of degree one in $T$,
a \emph{spine} of $T$ is a longest path $P$ of $T$, and a  \emph{leg} of $T$ is a path of $T$
that joins one leaf from $T-V(P)$ and one vertex from $P$.

 Let $T$ be a lobster with $m$ edges, and $P=v_0v_1\cdots v_p$ be a spine of $T$. If $T=P$, then $T$ has an antimagic orientation by Lemma \ref{lemma3}. So we assume $T\neq P$. Let
  $U=\{v_{h_1},v_{h_2},\cdots,v_{h_t}\}\subseteq V(P)$ be the set of vertices of degree at least three in $T$, where $h_1<h_2<\cdots<h_t$.
  Since $T$ is not a path, $|U|\ge 1$.
  Define
   $$X=\{u\in V(T)\setminus V(P)\mid u\ \mbox{is\ adjacent\ to\ a\ vertex\ in}\ U\}, \quad \text{and} \quad Y=V(T)\setminus(X\cup V(P)).$$  By the definition of a lobster,  for each vertex $u\in Y$, $d_T(u)=1$. We define an antimagic orientation $(\overrightarrow{T},\tau)$ of $T$ in four steps.
 Note that $V(P)\cup X\cup Y=V(T)$.
   \begin{enumerate}[Step 1]
   	\item Orient the edges in $E(P)$ and label them.
   	
   	By Lemma 1, $P$ has an orientation $\overrightarrow{P}$ and a labeling $\tau_1$ using numbers $1,2,\cdots,p$ satisfies the following conditions:
   	\begin{enumerate}[(i)]
   		\item $s_{\overrightarrow{P}}(v)\geq 1$ for any vertex $v\in U$;
   		\item $1\leq |s_{\overrightarrow{P}}(v)|\leq p$ for any vertex $v\in V(P)\setminus U$; and
   		\item for any two  distinct vertices $u,v\in V(P)\setminus U$, we have that $s_{\overrightarrow{P}}(u)\neq s_{\overrightarrow{P}}(v)$.
   	\end{enumerate}

   	\item Orient the remaining edges.
   	
   	For each edge $uv\in E(T)$ with $u\in X$, orient $uv$ from $u$ to $v$.  That is, all edges incident to a vertex in $X$ are oriented  towards either $P$ or $Y$.

   	Let $M_1$ be a maximum matching of $T$ using edges   between $X$ and $V(P)$, and $M_2$ be a maximum matching of $T$ using edges   between $X$ and $Y$.  In other words,  $M_1$
   	is a matching of size $|U|$ that saturates all vertices in $U$ and $M_2$
   	is a matching of size $|X|$ that saturates all vertices in $X$.
   	Clearly, $M_1$ and $M_2$ are edge-disjoint.

\item Label  arcs in $\partial(X)\backslash (M_1\cup M_2)$.
   	
Label the arcs in $A(X,Y\setminus V(M_2))$ using numbers in $[p+|M_2|+1,p+|Y|]$  one by one arbitrarily. (We reserve numbers in $[p+1, p+|M_2|]$ for edges in $M_2$. )

Let $$X_1=\{u\in X\mid d_{T}(u)=1\} \quad \text{and} \quad |A(X_1, V(P)\cap V(M_1))|=r.$$
 Label arcs in $A(X_1, V(P)\setminus V(M_1))$ with numbers in $[p+|Y|+1,p+|Y|+|X_1|-r]$ one by one arbitrarily. Label arcs in $A(X\setminus (X_1\cup V(M_1)), V(P))$ with numbers in $[p+|Y|+|X_1|-r+1+|M_1|,m]$ one by one arbitrarily (numbers in $[p+|Y|+|X_1|-r+1,p+|Y|+|X_1|-r+|M_1|]$ are reserved for edges in $M_1$).
Denote the current labeling  by $\tau_2$ and the partial vertex-sum of a vertex $x\in V(T)$  by $s_2(x)$.

\item  Label arcs in $M_1\cup M_2$.

Assume that $s_2(v_{i_1})\leq s_2(v_{i_2})\leq \cdots \leq s_2(v_{i_t})$, where $\{i_1,i_2,\cdots,i_t\}=\{h_1, h_2, \cdots, h_t\}$. Label arcs in $M_1$ incident with $v_{i_1},v_{i_2},\cdots,v_{i_t}$ with $p+|Y|+|X_1|-r+1,\cdots,p+|Y|+|X_1|-r+|M_1|$, respectively in this order. The resulting labeling is denoted by $\tau_3$ and the partial  vertex-sum of a vertex $x\in V(T)$ is denoted by $s_3(x)$. By the way of assigning labels to edges in $M_1$, we have
\begin{equation}\label{ine1}
s_3(v_{i_1})<s_3(v_{i_2})< \cdots <s_3(v_{i_t}).
\end{equation}

Let $X\setminus X_1=\{x_1,x_2,\cdots,x_q\}$.
Assume, without loss of generality,  that  $s_3(x_1)\leq s_3(x_2)\leq \cdots \leq s_3(x_q)$. Label the arcs in $M_2$ incident with $x_1,x_2,\cdots,x_q$ with $p+1,p+2,\cdots,p+|M_2|$,  respectively in this order. The resulting labeling is denoted by $\tau_4$ and the vertex-sum of a vertex $x\in V(T)$ is denoted by $s_4(x)$.
By the way of assigning labels to edges in $M_2$, we have
\begin{equation}\label{ine2}
s_4(x_1)< s_4(x_2)< \cdots <s_4(x_q).
\end{equation}
\end{enumerate}

It is easy to see that $\tau_4$ is a bijection from $A(\overrightarrow{T})$ to $[m]$. We  show that $\tau_4$ is an antimagic labeling of $\overrightarrow{T}$ in the following.
Let $u,v\in V(\overrightarrow{T})$ be any two distinct vertices.

Note that for every $x\in V(P)\setminus U$, $s_4(x)= s_{\overrightarrow{P}}(x)$.
If $u,v\in V(P)\setminus U$, then $s_4(u)\ne s_4(v)$ by Step 1.
 If $u,v\in U$, then $s_4(u)\neq s_4(v)$ by~\eqref{ine1}.
Thus we assume  $u\in V(P)\setminus U$ and $v\in U$. Then $|s_4(u)|\le p$ and $s_4(v)>p$. Therefore, if $u,v\in V(P)$, it holds that
$s_4(u)\neq s_4(v)$.

Consider now that  $u,v\in X$.
If $u,v\in X_1$,  then $s_4(u)\neq s_4(v)$ as $u$ and $v$ are leaves of $T$.
If  $u,v\in X\setminus X_1$, then $s_4(u)\neq s_4(v)$ by~\eqref{ine2}. Thus we assume that
$u\in X_1$ and $v\in X\setminus X_1$.
By Steps 3 and 4,   labels assigned to edges in $A( X_1, V(P))$ is a subset of $[p+|Y|+1, p+|Y|+|X_1|-r+|M_1|]. $
Thus $|s_4(u)|\leq p+|Y|+|X_1|-r+|M_1|$. Every vertex in $X\setminus X_1$ is adjacent in $T$ to a
vertex from $Y$ and is adjacent to a vertex from $V(P)$. By Steps 3 and 4, the smallest label that is assigned to edges in $A(X,Y)$
is $p+1$, and the smallest label that is assigned to edges in $A(X\setminus X_1, V(P))$
is $p+|Y|+|X_1|-r+1$.
Therefore $|s_4(v)|\geq p+1+p+|Y|+|X_1|-r+1>p+|Y|+|X_1|-r+|M_1|+2$, as  $p>|M_1|$. Thus $s_4(v) \ne s_4(u)$.  Therefore if $u,v\in X$, then $s_4(u)\neq s_4(v)$.

Since any vertex in $Y$ is a leaf of $T$,  if  $u,v\in Y$, then it naturally holds that $s_4(u)\neq s_4(v)$.

Thus  we only need to prove that  when $u$ and
$v$ are from two distinct sets among  $V(P), X$, and $Y$, it holds that $s_4(u)\neq s_4(v)$.
This follows by the following arguments.
For every $y\in Y$,  by Step 2, the arc incident to $y$ is entering $y$. Thus, by Steps  3 and 4, $s_4(y)\in [p+1,p+|Y|]$.
For every $x\in X$,  by Step 2, the arcs incident to $x$ are leaving $x$. Thus, by Steps  3 and 4, $s_4(x)<0$ and  $|s_4(x)|\ge p+|Y|+1 $.
For every $z\in V(P)$,  by Step 1, we have either
$|s_4(z)|\le p$ or $s_4(z)\ge p+|Y|+1$.
Therefore $\tau_4$ is an antimagic labeling of $\overrightarrow{T}$.
\section{Open problem}

In this paper, we showed that every lobster admits an antimagic orientation. Since every bipartite antimagic graph
$G$ admits an antimagic orientation, it is natural to ask that whether lobsters are antimagic.
We believe this is true.

\begin{conj} Every lobster admits an antimagic labeling.
\end{conj}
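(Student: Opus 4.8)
The plan is to attack the lobster $T$ directly through its structure around a spine, building an orientation $\overrightarrow{T}$ together with a bijection $\tau_4 : A(\overrightarrow{T})\to[m]$ in four stages, arranged so that the vertex-sums of the various kinds of vertices land in pairwise-disjoint ``windows.'' Fix a spine $P=v_0v_1\cdots v_p$ (a longest path of $T$). If $T=P$ we are done by Lemma~\ref{lemma3}, so assume $T\ne P$ and let $U=\{v_{h_1},\dots,v_{h_t}\}\subseteq V(P)$ be the spine vertices of degree at least $3$; since $T$ is not a path, $U\ne\emptyset$, and since $P$ is longest its endpoints are leaves, so $v_0,v_p\notin U$ and hence $t=|U|\le p-1<p$. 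Partition $V(T)\setminus V(P)$ into $X$, the off-spine neighbours of $U$, and $Y=V(T)\setminus(V(P)\cup X)$; by the lobster condition every vertex of $Y$ is a leaf, so $T$ is nothing but $P$ with pendant paths of length one or two rooted at vertices of $U$.

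First I would orient and label the spine: apply Lemma~\ref{lemma1} to $P$ with this set $U$ to get an orientation $\overrightarrow{P}$ and a bijection $\tau_1 : A(\overrightarrow{P})\to[p]$ with $s_{\overrightarrow{P}}(u)\ge 1$ for $u\in U$, with $1\le|s_{\overrightarrow{P}}(v)|\le p$ for $v\in V(P)\setminus U$, and with the values $s_{\overrightarrow{P}}(v)$ over $v\in V(P)\setminus U$ pairwise distinct. Next I orient every remaining edge so that all arcs incident with $X$ leave $X$; then every arc of $\partial(X)$ points into $P$ or into $Y$, which already forces $s(y)>0$ for $y\in Y$ and $s(x)<0$ for $x\in X$ in the final labeling. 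Fix a matching $M_1$ saturating $U$ with edges between $X$ and $V(P)$, and a matching $M_2$ saturating the non-leaf vertices of $X$ with edges between $X$ and $Y$; these exist (every $u\in U$ has an off-spine neighbour, and every non-leaf $x\in X$ has a pendant leaf in $Y$) and are edge-disjoint.

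Then I would label all arcs outside $M_1\cup M_2$ using the high end of $[m]$, splitting $X$ into its leaves $X_1$ and the rest $X\setminus X_1$ and reserving a block $[p+1,p+|M_2|]$ for $M_2$ and a length-$|M_1|$ block for $M_1$: arcs into $Y$ get the very top, arcs from $X_1$ to $V(P)$ the next block down, arcs from $X\setminus X_1$ to $V(P)$ the block just above the $M_1$-block. The effect is that after this stage the magnitudes of the partial vertex-sums of $Y$, of $X_1$, of $X\setminus X_1$, and of $V(P)\setminus U$ lie in four disjoint intervals, the last being $[1,p]$ inherited from Lemma~\ref{lemma1}. In the final stage I would label $M_1$ greedily: order $U$ by current partial sum and assign the reserved $M_1$-labels in increasing order, so that (since each $u\in U$ gets exactly one $M_1$-arc, entering it) the sums on $U$ become strictly increasing, hence distinct, and stay positive and larger than $p+|Y|$; then label $M_2$ the same way to make the sums on $X\setminus X_1$ strictly increasing. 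To finish, verify that same-class pairs differ ($V(P)\setminus U$ by Lemma~\ref{lemma1}, $U$ and $X\setminus X_1$ by the two strict chains, and $X_1$, $Y$ because those are leaves with distinct single labels) and that different-class pairs differ by comparing the disjoint windows and signs: $s(y)\in[p+1,p+|Y|]$, $s(x)<0$ with $|s(x)|\ge p+|Y|+1$, and for $z\in V(P)$ either $|s(z)|\le p$ or $z\in U$ with $s(z)\ge p+|Y|+1>0$.

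The step I expect to be the main obstacle is exactly this window bookkeeping in the last two stages: the reserved blocks and the order in which the free arcs of $\partial(X)$ are labeled must be chosen so that simultaneously the four magnitude-windows stay disjoint, the greedy labeling of $M_1$ neither makes two $U$-sums equal nor drags a $U$-sum down into the interval $[1,p]$, and a leaf of $X$ never collides with a non-leaf of $X$ --- the last point relying on the crude inequality $p>|M_1|$ (which holds because $P$ is a longest path), so that the smallest possible $|s|$ of a non-leaf $X$-vertex, which is fed by one arc into $P$ and one arc into $Y$, already exceeds the largest possible $|s|$ of a leaf of $X$. The genuinely subtle construction, the labeling of the spine itself, is already packaged in Lemma~\ref{lemma1}; once its output is in hand, everything else reduces to sign-and-magnitude comparisons among these windows.
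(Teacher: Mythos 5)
There is a fundamental mismatch between what you have proved and what the statement asserts. The statement is the paper's closing \emph{conjecture} that every lobster admits an antimagic \emph{labeling} in the undirected sense: a bijection $\tau:E(T)\to[m]$ such that the sums $\sum_{e\ni v}\tau(e)$ (all terms positive) are pairwise distinct. Your construction instead produces an antimagic \emph{orientation}: you orient the edges and then distinguish vertices by \emph{signed} sums, in-labels minus out-labels. That is exactly the paper's proof of its Theorem~\ref{Thm}, and it is correct as such, but it does not touch the conjecture. The paper itself is explicit that the implication you would need runs the wrong way: an antimagic bipartite graph admits an antimagic orientation (orient everything across the bipartition), but an antimagic orientation gives no antimagic labeling, and the paper states that whether lobsters are antimagic ``is still unknown.''

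The place where your argument genuinely depends on the orientation is the window bookkeeping you yourself flag as the delicate step. You separate the classes by sign: $s(x)<0$ for $x\in X$ because all arcs leave $X$, while $s(y)>0$ for $y\in Y$ and $s(u)>0$ for $u\in U$, and Lemma~\ref{lemma1} is engineered to keep $|s(v)|\le p$ on $V(P)\setminus U$ while pushing $U$ upward. In the undirected setting every incidence contributes positively, so a vertex of $X\setminus X_1$ (one edge to $P$, one edge to a $Y$-leaf, both carrying large labels) and a vertex of $U$ (several large labels from its pendant edges plus two spine labels) both acquire large positive sums in overlapping ranges, and there is no sign available to separate them; likewise a leaf of $X_1$ and a leaf of $Y$ compete for the same single-label values as the small-degree spine vertices. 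One would need an entirely different mechanism --- e.g.\ the degree-based separation used in the Kaplan--Lev--Roditty and caterpillar results --- and no such mechanism is supplied here. The conjecture remains open; your proposal proves the theorem, not the conjecture.
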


\bibliography{Bibifile1}

\end{document}